\newtheorem{theorem}{Theorem}[section]
\newtheorem{conjecture}[theorem]{Conjecture}
\newtheorem{lemma}[theorem]{Lemma}
\newtheorem{definition}[theorem]{Definition}
\newtheorem{example}[theorem]{Example}
\newtheorem{exercise}[theorem]{Exercise}
\numberwithin{equation}{section}
\def\<{\left<}
\def\>{\right>}
\newcommand{\be}{\begin{enumerate}}
\newcommand{\ee}{\end{enumerate}}
\newcommand{\bd}{\begin{description}}
\newcommand{\ed}{\end{description}}
\newcommand{\rad}{{\rm rad}}
\begin{document}
\title{Powerful numbers and the ABC-conjecture}
\author{David Cushing\\ J. E. Pascoe}

\maketitle
\begin{abstract}
The $abc$ conjecture is a very deep concept in number theory with wide application to many areas of number theory. In this article we introduce the conjecture and give examples of its applications. In particular we apply the $abc$ conjecture to the location of powerful numbers.
\end{abstract}

\section{Introduction}
The $abc$ conjecture has gotten a lot of press lately. This begs the questions:
\begin{enumerate}
	\item What does the $abc$ conjecture say?
	\item More importantly, what does the $abc$ conjecture do?
\end{enumerate}

Given a number $x$ with prime factorization 
	$$x= p_1^{a_1}p_2^{a_2}\ldots p_n^{a_n},$$
we define the {\it radical} of $x$ via the following formula:
	$${\rm rad}(x) = p_1p_2\ldots p_n.$$
\begin{conjecture}
	Let $\varepsilon > 0.$ There are finitely many triples $a, b, c$
	such that
	\begin{enumerate}
		\item $(a,b) = (b,c) = (a, c) = 1,$
		\item $a+b=c,$
	and
		\item $\rad(abc)^{1+\varepsilon}<c.$
	\end{enumerate}
\end{conjecture}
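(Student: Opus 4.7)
The statement above is the celebrated $abc$ conjecture of Masser and Oesterl\'e, which remains open; no accepted proof exists in the mainstream literature, so any plan I sketch is necessarily speculative and is in part a survey of heuristics and partial results. My first instinct is to examine the function field analogue, the Mason--Stothers theorem, which for coprime polynomials $a(t)+b(t)=c(t)$ over a field asserts $\max(\deg a,\deg b,\deg c)\le \deg\rad(abc)-1$. Its proof is essentially elementary: one observes that the Wronskian of $a,b,c$ vanishes and that a polynomial of large degree with few distinct roots must share many factors with its derivative. I would try to transplant this to $\ZZ$ by replacing differentiation with an arithmetic analogue---the Fermat quotient $(x^{p}-x)/p$, Buium's $p$-derivations and arithmetic jet spaces, or some notion of an arithmetic Wronskian---hoping that the vanishing of such a Wronskian of $a,b,c$ would force $c\le \rad(abc)^{1+\eps}$ for all but finitely many coprime triples.

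As a second route, I would translate the conjecture into arithmetic geometry via Belyi's theorem: an $abc$-triple corresponds to a rational point of small height on $\mathbb{P}^{1}\setminus\{0,1,\infty\}$ with controlled ramification, and Szpiro's conjecture on the conductor and minimal discriminant of the Frey curve $y^{2}=x(x-a)(x+b)$ is equivalent (up to the choice of exponent) to the statement at hand. This reduces $abc$ to a height inequality on integral points of a modular curve, a special case of Vojta's conjecture, which one would attempt to attack by Arakelov-theoretic methods in the spirit of Faltings' proof of Mordell.

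The main obstacle---and the reason $abc$ is famously hard---is the total absence of an effective comparison between the additive structure $a+b=c$ and the multiplicative structure encoded by $\rad(abc)$: knowing the prime factorisations of $a$ and $b$ tells one essentially nothing about the prime factorisation of $a+b$. A heuristic counting argument (triples with $\rad(abc)\le R$ number roughly $R^{3+o(1)}$, while those violating $c\le \rad(abc)^{1+\eps}$ form a measurably thinner set) makes the assertion overwhelmingly plausible, but converting heuristic density into rigorous finiteness is precisely the step at which every known approach, including Mochizuki's inter-universal Teichm\"uller theory, becomes contentious. I would therefore anticipate that any honest execution of the plan above yields at best a plausibility argument, and would accordingly not expect the remainder of this paper to prove the conjecture but rather to draw consequences from it.
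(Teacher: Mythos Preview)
Your assessment is correct: the statement is labelled a \emph{Conjecture} in the paper, and the paper makes no attempt to prove it. It is assumed as a hypothesis in the subsequent theorems (e.g.\ Theorems~\ref{nearfactorials} and the result on powerful triples in arithmetic progression), exactly as you anticipate in your final sentence. There is therefore nothing to compare your sketch against; the paper's ``proof'' consists solely of stating the conjecture and then drawing consequences from it.
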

The $abc$ conjectue is thought to lie very deep indeed. For example, Fermat's last theorem for sufficiently large
exponents follows from the $abc$ conjecture \cite{GT}.
That is, for sufficiently large $n,$ the $abc$ conjecture implies
that
	$$a^n + b^n = c^n$$
has no solutions.


A number $x$ is called {\it powerful} if
	$$p | x \Rightarrow p^2 | x.$$
We explore the relationship between powerful numbers and the $abc$ conjecture.

The Brocard problem is to find solutions to the equation 
	$$n! + 1 = m^2.$$
Paul Erd\"os conjectured that this equation has only finitely many solutions. 
In fact only three solutions $(m,n)$ are known:
	$$(4,5), (5,11), (7,71) $$
It was shown, assuming the $abc$ conjecture, \cite{Ov}, that this is indeed the case.
In fact, for a fixed integer $k$,
	$$n! +k = m^2$$
has only finitely many solutions.
We show that, for a fixed $k$, assuming the $abc$ conjecture, $n! +k$ is a powerful number
only finitely often. Qualitatively, powerful numbers cannot be found near 
factorials except finitely often.

Solomon Golomb and Paul Erd{\"o}s seperately conjectured that
there are only finitely many $n$ such that 
	$n,$ $n+1$ and $n+2$ are powerful numbers.
The conjecture has been proven assuming the $abc$ conjecture is true, \cite{Sa}.
We show that for any arithmetic progression of
the form
	$a_n = a + nd$
where $(a,d)=1,$ that there are only finitely many $n$ such that
$a_n$, $a_{n+1}$ and $a_{n+2}$ are powerful.
\section{The radical}
\begin{definition}
Let $x\in\mathbb{N}.$ The radical of $x$, denoted by $\rad(x),$ is the product of the distinct primes which divide $x$.
\\
More concretely, if $x$ has unique prime factorization 
$$x=p_{n_{1}}^{a_{1}}\cdots p_{n_{k}}^{a_{k}}$$
then 
$$\rad(x)=p_{n_{1}}\cdots p_{n_{k}}$$
\end{definition}
We now prove some elementary properties of the radical function. These results mainly make use of the prime factorisation of number.
\begin{lemma}
Let $x,n\in\mathbb{N}.$ Then
$$\rad(x^{n})=\rad(x).$$
\end{lemma}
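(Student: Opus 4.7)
The plan is to proceed directly from the definition of the radical via the prime factorization. The key observation is that raising $x$ to the $n$th power does not introduce any new prime factors and does not eliminate any existing ones; it only changes the exponents. Since the radical depends only on the \emph{set} of primes dividing its argument, not on the exponents with which they appear, the equality should fall out immediately.

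Concretely, I would begin by writing the unique prime factorization $x = p_{n_1}^{a_1} \cdots p_{n_k}^{a_k}$, as in the preceding definition. Then I would compute $x^n = p_{n_1}^{na_1} \cdots p_{n_k}^{na_k}$, noting that this is the prime factorization of $x^n$ (since each $na_i \geq 1$ provided $n \geq 1$ and $a_i \geq 1$, and the primes $p_{n_i}$ are still distinct). Applying the definition of the radical to this factorization yields $\rad(x^n) = p_{n_1} \cdots p_{n_k}$, which is exactly $\rad(x)$.

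The only minor subtlety is the edge case $n = 0$, in which $x^n = 1$ has empty prime factorization and $\rad(1) = 1$; if this case is to be included, one must either exclude $n = 0$ by convention, assume $n \geq 1$, or handle $x = 1$ separately (in which case $\rad(x) = 1$ as well, and equality still holds). Since the definition is phrased for $\mathbb{N}$ and the intended use of the lemma in this paper is for positive exponents, I would simply state the result for $n \geq 1$ and move on. There is no real obstacle here — the lemma is essentially a restatement of the definition.
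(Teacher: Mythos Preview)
Your proposal is correct and follows essentially the same approach as the paper: write the prime factorization of $x$, raise it to the $n$th power, and read off the radical directly from the definition. Your remarks on the $n=0$ edge case are additional but harmless; the paper does not address that case.
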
 
\begin{proof}
Let $x$ have unique prime factorisation 
$$x=p_{n_{1}}^{a_{1}}\cdots p_{n_{k}}^{a_{k}}.$$ 
Then
$\rad(x^n)=\rad(p_{n_{1}}^{n\cdot a_{1}}\cdots p_{n_{k}}^{n\cdot a_{k}})=p_{n_{1}}\cdots p_{n_{k}}=\rad(x).$
\end{proof}
The following lemma are proved in a similar manner and their proofs are left as an exercise for the reader.
\begin{lemma}
Let $x,y\in\mathbb{N}.$ Then,
\begin{enumerate}
\item
$\rad(x)\leq x,$
\item
$\rad(xy)\leq\rad(x)\rad(y).$
\end{enumerate}
\end{lemma}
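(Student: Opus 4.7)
The plan is to prove both inequalities by direct inspection of the prime factorisation, since the radical is defined purely in those terms.

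For part (1), I would start with the prime factorisation $x = p_{n_1}^{a_1}\cdots p_{n_k}^{a_k}$ and compare it termwise against $\rad(x) = p_{n_1}\cdots p_{n_k}$. Because each exponent $a_i$ is at least $1$, we have $p_{n_i} \leq p_{n_i}^{a_i}$ for every $i$, and multiplying these inequalities together yields $\rad(x) \leq x$. This is essentially a one-line observation and should require no further machinery.

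For part (2), the natural approach is set-theoretic on the prime supports. Let $P(x)$ denote the set of primes dividing $x$. Then $P(xy) = P(x) \cup P(y)$, so
$$\rad(xy) = \prod_{p \in P(x) \cup P(y)} p,$$
whereas
$$\rad(x)\rad(y) = \prod_{p \in P(x)} p \cdot \prod_{p \in P(y)} p = \prod_{p \in P(x)\cup P(y)} p \;\cdot\; \prod_{p \in P(x) \cap P(y)} p.$$
The second factor is a product of primes, hence at least $1$, giving the desired inequality (with equality precisely when $P(x)$ and $P(y)$ are disjoint, i.e.\ $\gcd(x,y)=1$).

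I do not anticipate any genuine obstacle here; the only mild care required is in organising the notation so that the double-counting of the common primes is made transparent. Everything else follows directly from the definition of $\rad$ and the fact that primes are at least $2$.
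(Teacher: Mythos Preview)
Your argument is correct: both parts follow immediately from the prime factorisation, exactly as you outline, and the double-counting of the common primes in part~(2) is handled cleanly. The paper does not actually supply a proof here---it leaves the lemma as an exercise, remarking only that it is ``proved in a similar manner'' to the preceding identity $\rad(x^n)=\rad(x)$---so your write-up is precisely the kind of prime-factorisation verification the authors have in mind.
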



In Section \ref{nearfactorialssection} we make use of the radical of factorials. Before we state what the radical of a factorial number is we need to define what a primorial is.
\begin{definition}
Let $p_{n}$ denote the nth prime number. Let $x\in \mathbb{N}$ and $\pi(x)$ denote the number of primes less than or equal to $x.$ The {\it primorial} of $x$, denoted by $x\#$, is defined as
$$x\# =\Pi_{n=1}^{\pi(x)}p_{n}.$$
In other words the primorial of $x$ is the product of all the primes less than or equal to $x$.
\end{definition} 
\begin{lemma}
Let $n\in\mathbb{N}.$ Then
$$\rad(n!)= n\#.$$
\end{lemma}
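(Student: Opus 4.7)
The plan is to show that the set of primes dividing $n!$ is exactly the set of primes less than or equal to $n$; once this set-theoretic equality is established, taking the product over it yields $n\#$ by definition of the primorial, and equals $\rad(n!)$ by the definition of the radical.

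First I would establish the inclusion that every prime $p \leq n$ divides $n!$. This is immediate, since $n! = 1 \cdot 2 \cdots n$ contains $p$ itself as one of its factors (as $p \leq n$), so $p \mid n!$.

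Next I would prove the reverse inclusion: if $p$ is a prime dividing $n!$, then $p \leq n$. The natural tool here is Euclid's lemma: if $p$ is prime and $p \mid a_1 a_2 \cdots a_n$, then $p$ divides some $a_i$. Applying this to the factorization $n! = 1 \cdot 2 \cdots n$, any prime divisor $p$ of $n!$ must divide some integer $k$ with $1 \leq k \leq n$, which forces $p \leq k \leq n$.

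Combining the two inclusions, the set of distinct primes dividing $n!$ equals $\{p : p \text{ prime}, \; p \leq n\}$. Taking the product gives
\[
\rad(n!) \;=\; \prod_{\substack{p \text{ prime} \\ p \leq n}} p \;=\; \prod_{k=1}^{\pi(n)} p_k \;=\; n\#,
\]
which is the desired identity. There is no real obstacle here; the only nontrivial ingredient is Euclid's lemma, which is standard. The argument is essentially a direct unpacking of the definitions of $\rad$ and of the primorial.
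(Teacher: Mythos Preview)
Your argument is correct: showing that the prime divisors of $n!$ are exactly the primes $p\leq n$ (one inclusion by the obvious factor $p$ in the product $1\cdot 2\cdots n$, the other by Euclid's lemma) and then taking the product of that set is precisely the right thing to do, and there is no gap.

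As for comparison: the paper does not actually supply a proof of this lemma. It is stated without proof, in the same spirit as the preceding lemma, which the authors explicitly leave as an exercise for the reader. Your write-up is exactly the kind of direct unpacking of the definitions that the paper has in mind; the only ``nontrivial'' ingredient, Euclid's lemma, is standard, and one could even avoid invoking it by name by observing that if a prime $p>n$ then $p\nmid k$ for every $k\leq n$, so $p\nmid n!$.
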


We now state a very important lemma concerning the radical of a powerful number.

\begin{lemma}
Let $x$ be a powerful number. Then
$$\rad(x)<x^{\frac{1}{2}}.$$
\end{lemma}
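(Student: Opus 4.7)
The plan is to unpack both definitions and let the prime factorisation do the work. Write $x$ in its unique factorisation $x = p_{n_1}^{a_1}\cdots p_{n_k}^{a_k}$. The definition of powerful gives $a_i \geq 2$ for each $i$, while the definition of the radical gives $\rad(x) = p_{n_1}\cdots p_{n_k}$. These two observations together are essentially the whole content of the lemma.

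Next, I would compare $x$ to $\rad(x)^2$ termwise. Since each $a_i \geq 2$, we have $p_{n_i}^{a_i} \geq p_{n_i}^{2}$, and multiplying these inequalities across $i = 1, \ldots, k$ yields
\[
x = \prod_{i=1}^{k} p_{n_i}^{a_i} \;\geq\; \prod_{i=1}^{k} p_{n_i}^{2} \;=\; \rad(x)^2.
\]
Taking square roots gives $\rad(x) \leq x^{1/2}$. To upgrade this to the strict inequality in the statement, I would observe that equality throughout forces every $a_i$ to equal exactly $2$, i.e.\ $x$ is the square of a squarefree number; conversely, if some $a_j \geq 3$ then the $j$-th factor in the product above is strictly larger, giving $x > \rad(x)^2$. (The edge case $x = \rad(x)^2$, for instance $x = 4$, is genuinely equality, so the statement is really $\leq$; I would either note this or restrict to $x$ not a square of a squarefree number.)

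There is no real obstacle here: the only subtle point is whether the inequality is strict or not, and as just noted the honest statement is $\rad(x) \leq x^{1/2}$ with equality exactly when $x$ is the square of a squarefree number. In the intended applications of the lemma (feeding $\rad(abc)$ into the $abc$ conjecture), a non-strict inequality is fine, so I expect the proof to simply record the two-line computation above.
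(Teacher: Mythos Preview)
Your proof is correct and follows essentially the same approach as the paper: factor $x$, use $a_i \geq 2$ to conclude $\rad(x)^2 \leq x$ (the paper phrases this step as $\rad(x)^2 \mid x$), and take square roots. Your observation about strictness is also well-taken---the paper's own argument likewise yields only $\rad(x) \leq x^{1/2}$, with equality precisely when $x$ is the square of a squarefree number.
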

\begin{proof}
Let
$$x=p_{n_{1}}^{a_{1}}\cdots p_{n_{k}}^{a_{k}},$$
where $a_{i}\geq 2$ for each $i$.
\\
\\ 
Then
$$\rad(x)^{2}=p_{n_{1}}^{2}\cdots p_{n_{k}}^{2}.$$
Therefore $\rad(x)^{2}|x$ and so $\rad(x)^2\leq x.$ 
\end{proof}

\section{The abc conjecture revisited}
After exploring the properties of the radical we now return to the abc conjecture. 
\begin{conjecture}
Let $\varepsilon>0.$ Then there exists only finitely many triples $(a,b,c)$ with $(a,b)=1$ and $a+b=c$ such that
$$\rad(abc)^{1+\varepsilon}<c.$$  
\end{conjecture}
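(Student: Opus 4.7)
This statement is a restatement of the abc conjecture itself (essentially Conjecture 1 from the introduction, since $(a,b)=1$ together with $a+b=c$ forces $(a,c)=(b,c)=1$). It is one of the deepest open problems in number theory, and no accepted proof is known, so any genuine proposal is necessarily speculative. I will sketch the general shape any serious attempt is forced into.

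The first thing any plan must accept is that the exponent $1+\varepsilon$, and not simply $1$, is essential: explicit infinite families of coprime triples are known in which $c/\rad(abc)$ grows without bound, so there is no hope of a naive bound $\rad(abc) \geq c$, and any argument must extract the extra $\varepsilon$-power of savings from the rigidity of the additive relation $a+b=c$. The most developed framework is geometric. Belyi's theorem lets one encode an abc triple as a rational point on an algebraic curve ramified over $\{0,1,\infty\}$, transferring the problem to a height inequality on that curve, and Vojta's conjectures predict inequalities of exactly the required shape. My plan would therefore be: (i) transform $(a,b,c)$ via a suitable Belyi cover into a rational point on a curve of high genus; (ii) invoke an effective height inequality in the style of Faltings' theorem, with uniform constants in the parameters of the cover; (iii) specialize back to recover $\rad(abc)^{1+\varepsilon} \geq c$ for all but finitely many triples.

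The main obstacle, and precisely the reason the conjecture remains open, is step (ii): the presently known effective height bounds are simply not sharp enough, and not uniform enough in the necessary parameters, to extract the exponent $1+\varepsilon$ on the radical from the Belyi-transformed statement. A parallel strategy via Frey-like elliptic curves $y^2 = x(x-a)(x+b)$ together with modularity, in the spirit of Wiles' proof of Fermat's Last Theorem, meets an analogous obstruction: the modular machinery handles specific exponential shapes but does not control the general radical. I would expect no genuine progress on step (ii) by elementary means; I note for completeness that a proposed proof by Mochizuki via inter-universal Teichm\"uller theory exists but has not been accepted by the broader mathematical community, and so the statement is, pending that, most responsibly left as a conjecture.
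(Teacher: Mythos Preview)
Your assessment is correct: this is simply a restatement of the $abc$ conjecture itself, and the paper makes no attempt to prove it. It is introduced in Section~3 precisely as a conjecture to be \emph{assumed} in the subsequent applications (Theorems~\ref{nearfactorials} and the powerful-triples result), not as a statement to be established. So there is no ``paper's own proof'' to compare against, and your decision to treat it as open is exactly right.

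Your surrounding discussion --- the Belyi/Vojta framework, Frey curves, the status of Mochizuki's work --- goes well beyond anything the paper engages with. The paper is expository and aimed at showing how to \emph{use} the conjecture, not at surveying attack strategies on it. None of what you wrote is wrong, but in the context of this paper it would be out of scope; the appropriate response here is simply to note that the statement is a conjecture assumed throughout and not proved.
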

We demonstrate the power of the abc conjecture with the following example. 
\begin{example}[Fermat's Last Theorem for sufficiently large exponents]
Let $n\in\mathbb{N}$ be sufficiently large. Then
$$x^{n}+y^{n}=z^{n}$$ 
has no solutions over the integers assuming the abc conjecture. 
\end{example}
\begin{proof}
The proof is left as a guided exercise for the reader to become familiar with the process of applying the abc conjecture.
\\
\\
Suppose that 
$$x^{n}+y^{n}=z^{n}$$ 
for some $n\geq 6$ such that $(x,y)=1.$ Set
$$a=x^n,$$
$$b=y^n,$$
$$c=z^n.$$
\begin{enumerate}
\item
Show that 
$$rad(abc)^2<c$$ using the properties of the radical discussed above.
\\
\\
\item
Apply the abc conjecture with $\varepsilon=1$ and deduce the result. 
\end{enumerate}
\end{proof}

\section{No powerful numbers near factorials}\label{nearfactorialssection}
\begin{theorem}\label{nearfactorials}
	Let $k\geq0.$
	Assuming the $abc$ conjecture,
	there are finitely many $x$ such that $x$ is a powerful number and
		$$|x-n!| \leq k$$
\end{theorem}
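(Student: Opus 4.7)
The plan is to turn the equation $x = n! + j$ (with $x$ powerful and $|j| \leq k$) into a coprime additive triple and then apply the abc conjecture. First I would dispose of $j = 0$: by Bertrand's postulate some prime $p$ satisfies $n/2 < p \leq n$, and such a $p$ divides $n!$ with multiplicity exactly one, so $n!$ is never powerful for $n \geq 2$, leaving only finitely many admissible $x$ in the case $j = 0$. For $1 \leq |j| \leq k$, set $d = \gcd(n!, j)$; since $d \mid j$ we have $d \leq k$. Dividing $n! + j = x$ through by $d$ and reordering so that $c$ is whichever of $n!/d$ or $x/d$ is larger produces a triple $(a, b, c)$ with $a + b = c$, $\gcd(a, b) = 1$, and $b = |j|/d \leq k$.

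Next I would bound $\rad(abc)$. The ``factorial side'' contributes a factor of at most $\rad(n!) = n\#$; the ``powerful side'' contributes $\rad(x) \leq x^{1/2}$ by the radical lemma for powerful numbers; and $\rad(b) \leq k$. Consequently
\[
\rad(abc) \leq n\# \cdot k \cdot x^{1/2}.
\]
On the other hand, $c \geq n!/k$ by construction.

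Third, fix a concrete exponent, say $\varepsilon = 1/3$. I would show that $(n\# \cdot k \cdot x^{1/2})^{1+\varepsilon} < c$ for all sufficiently large $n$. Using Chebyshev's estimate $n\# \leq 4^n$ and $x \leq 2 n!$, the left-hand side is at most a constant (depending on $k$) times $4^{(1+\varepsilon)n} (n!)^{(1+\varepsilon)/2}$, while $c \geq n!/k$. Since $(1+\varepsilon)/2 < 1$ and $4^{(1+\varepsilon)n}$ is swamped by $(n!)^{(1-\varepsilon)/2}$ (as $n/e \to \infty$), this inequality holds for $n$ large. Each admissible pair $(x, n)$ then yields a coprime triple violating the abc bound, and distinct $n$ produce distinct triples since $c$ is of order $n!$; if infinitely many admissible $x$ existed, infinitely many such triples would too, contradicting the abc conjecture.

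The main obstacle is the coprimality requirement: $n!$ shares prime factors with essentially every small $j$, so one cannot apply abc to $(n!, j, x)$ directly. Dividing out by $d = \gcd(n!, j)$ repairs this with bounded loss because $d \mid j$ forces $d \leq k$. The other crucial ingredient is the powerful-number lemma $\rad(x) \leq x^{1/2}$; without it $\rad(x)$ could be of order $x$ and the abc inequality would yield nothing.
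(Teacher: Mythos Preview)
Your proof is correct and follows essentially the same route as the paper: dispose of $j=0$ via Bertrand's postulate, then for each nonzero offset divide through by a common factor to obtain a coprime triple, and bound the radical using $\rad(n!)=n\#$ together with $\rad(x)\leq x^{1/2}$ for powerful $x$. The only cosmetic differences are that the paper, observing $j\mid n!$ once $n\geq j$, divides by $j$ itself (so $b=1$) and takes $\varepsilon=\tfrac12$, whereas you divide by $\gcd(n!,j)$, carry the harmless extra factor $\rad(b)\leq k$, and take $\varepsilon=\tfrac13$; you also supply the Chebyshev justification $n\#\leq 4^n$ that the paper leaves implicit.
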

We break the proof of Theorem \ref{nearfactorials} into three lemmas below.

\begin{lemma}
	$n!$ is a powerful number finitely often.
\end{lemma}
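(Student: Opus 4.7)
The plan is to prove the stronger unconditional statement that $n!$ fails to be powerful for every $n \geq 2$; the finiteness claim is then immediate, with the exceptional set contained in $\{0,1\}$. The key input is Bertrand's postulate: for each integer $n \geq 2$ there exists a prime $p$ with $n/2 < p \leq n$.

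Given such a $p$, I would argue in two steps. First, $p$ divides $n!$ because it appears as one of the factors $1,2,\ldots,n$. Second, $p^2$ does \emph{not} divide $n!$: the only multiple of $p$ lying in $\{1,2,\ldots,n\}$ is $p$ itself, since the inequality $2p > n$ forces $2p, 3p, \ldots$ to exceed $n$. Hence $p$ occurs to exactly the first power in the prime factorisation of $n!$, which directly contradicts the defining condition that every prime divisor of a powerful number appear with exponent at least $2$.

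I do not anticipate any real obstacle here. Note that the lemma already proved about the radical of a powerful number (that $\rad(x) < x^{1/2}$) is too weak to do the job by itself, because $\rad(n!) = n\#$ satisfies $n\# < \sqrt{n!}$ for all sufficiently large $n$ by Stirling and Chebyshev-type estimates, so the radical bound would give a non-contradiction in precisely the regime of interest. Bertrand's postulate, by contrast, isolates a single prime appearing to the first power and settles the matter immediately, without appealing to the $abc$ conjecture.
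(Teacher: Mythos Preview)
Your proof is correct and follows essentially the same approach as the paper: both invoke Bertrand's postulate to locate a prime that divides $n!$ to exactly the first power. Your formulation (a prime in $(n/2,\,n]$) is in fact slightly cleaner than the paper's, which applies Bertrand in the form ``a prime in $(n,\,2n)$'' to handle $(2n)!$ and $(2n-1)!$ separately and then checks the remaining small cases by hand.
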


\begin{proof}
	Let $n>3$. Bertrand's postulate states that there exists a prime number $p$ such that
	$$n<p<2n.$$ 
	(For an elegant and elementary proof of Bertrand's postulate, see \cite{}.)
	Thus, $p$ divides precisely one integer less than $2n.$ This shows that $p|(2n)!$ and $p|(2n-1)!$ and that $p\nmid (2n)!$ and $p\nmid (2n-1)!$. 
	\\
	Thus $n!$ not powerful for each $n\geq 7.$ The rest of the cases can simply be checked by hand. 
\end{proof}

\begin{lemma}
	$n!+k$ is powerful finitely often.
\end{lemma}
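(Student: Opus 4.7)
The plan is to derive a violation of the $abc$ conjecture from an infinite sequence of $n$ for which $n!+k$ is powerful. I will treat the case $k>0$ in detail; the case $k<0$ is entirely analogous (one plays the same game with the identity $m+|k|=n!$), and $k=0$ is the previous lemma. For $n$ large enough that $|k|$ divides $n!$, set $m=n!+k$, $d=\gcd(n!,k)=|k|$, and form the triple
$$A=\frac{n!}{d},\qquad B=\frac{k}{d},\qquad C=\frac{m}{d}.$$
Since $d$ divides both $n!$ and $k$, it also divides $m$, so $A,B,C$ are positive integers with $A+B=C$ and $\gcd(A,B)=1$; thus the triple is admissible for the $abc$ conjecture.

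Next I would bound $\rad(ABC)\le \rad(A)\rad(B)\rad(C)$ term by term. Since $A\mid n!$, the first lemma on the radical of a factorial gives $\rad(A)\le\rad(n!)=n\#$; clearly $\rad(B)\le|k|$, a constant in $n$; and because $m$ is powerful and $d\mid m$, the lemma on powerful numbers yields $\rad(C)\le\rad(m)\le m^{1/2}$. Consequently
$$\rad(ABC)\le n\#\cdot|k|\cdot m^{1/2}.$$

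The core inequality to verify is that for some fixed $\varepsilon\in(0,1)$ and all sufficiently large $n$,
$$\bigl(n\#\cdot|k|\cdot m^{1/2}\bigr)^{1+\varepsilon}<\frac{m}{d}=C.$$
This follows from two standard asymptotics: the Chebyshev-type bound $n\#\le 4^n$, which is only exponential in $n$, and Stirling's formula, which gives $m\ge n!/2\ge (n/e)^n/2$, super-exponential in $n$. Taking logarithms, the inequality reduces to comparing $(1+\varepsilon)n\log 4+O(1)$ against $\tfrac{1-\varepsilon}{2}\,n\log n$, and the latter wins for any $\varepsilon<1$ once $n$ is large. Each such $n$ then produces a distinct admissible triple $(A,B,C)$ with $\rad(ABC)^{1+\varepsilon}<C$, so the existence of infinitely many such $n$ contradicts the $abc$ conjecture.

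The main obstacle is really the coprimality bookkeeping: one must divide out the common factor $d$ while retaining a usable bound $\rad(C)\le m^{1/2}$, which works only because $\rad$ is monotone under division. The remaining asymptotic comparison is robust and does not require delicate estimates, since $n!$ dwarfs $n\#$ by a wide margin.
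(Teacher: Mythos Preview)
Your proof is correct and follows essentially the same route as the paper: divide through by $k$ (once $n$ is large enough that $k\mid n!$) to obtain a coprime triple, bound the radical of the factorial side by $n\#$ and the radical of the powerful side by $m^{1/2}$, and then observe that $(n\#)^{1+\varepsilon}m^{(1+\varepsilon)/2}$ grows more slowly than $m$. The only cosmetic differences are that you carry an extra (harmless) constant factor $|k|$ from $\rad(B)$, you are explicit that the target is $C=m/d$ rather than $m$, and you spell out the asymptotic comparison via $n\#\le 4^n$ and Stirling, whereas the paper simply picks $\varepsilon=\tfrac12$ and asserts $(n\#)^{3/2}x^{3/4}<x$ for large $n$.
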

\begin{proof}
	We  want to find solutions to
		$$n! + k = x,$$
	where $x$ is a powerful number.	
	
	Let $n \geq k.$
	Note that $k | n!.$
	So finding solutions to the original equation
	is the same as finding integer solutions to
		$$\frac{n!}{k} + 1 = \frac{x}{k}.$$
		
	Let $a = \frac{n!}{k},$
	$b = 1,$
	and $c = \frac{x}{k}.$
	Note that
		$$\rad{\frac{n!}{k}} \leq \rad{n!} \leq n\#,$$
	and
		$$\rad{\frac{x}{k}} \leq \rad{x} \leq x^{1/2}.$$	
	So,
	$$\rad{abc} = 
	\rad{\frac{n!}{k}\frac{x}{k}} \leq
	(n\#) x^{1/2}.$$
	Let $\varepsilon = \frac{1}{2}.$
	$$\rad{abc}^{1+\varepsilon} \leq (n\#)^{3/2} x^{3/4}< x=c$$
	for sufficiently large $n.$	
\end{proof}

We leave the following as an exercise for the reader, which completes the proof of Theorem \ref{nearfactorials}.
\begin{exercise}
	$n!-k$ is powerful finitely often.
\end{exercise}

\section{Powerful numbers occurring in arithmetic progression}
\begin{definition}
Let $(a_{n})_{n\in\mathbb{b}}$ be an arithmetic progression. Suppose there exists a $k$ such that $a_{k},a_{k+1}$ and $a_{k+2}$ are all powerful numbers. Then we say that $(a_{k},a_{k+1},a_{k+2})$ is a {\it powerful triple}. 
\end{definition}
We now study the location of powerful triples inside coprime arithmetic progressions. Suppose that the arithmetic progression in question is the natural numbers. Then Erd{\"o}s, Mollin, and Walsh conjectured that this progressions contains no powerful triples, i.e. there are no three consecutive powerful numbers. The abc-conjecture implies that only finitely many powerful triples can occur in the natural numbers. We now generalise this result to a general coprime arithmetic progression.
\begin{theorem}
Let $(a_{n})$ be a coprime arithmetic progression with common difference $d$. Under the assumption of the abc-conjecture there exists only finitely many powerful triples inside $(a_{n}).$ 
\end{theorem}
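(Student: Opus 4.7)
My plan is to apply the $abc$ conjecture to the algebraic identity
$$a_k a_{k+2} + d^2 = a_{k+1}^2,$$
which follows by direct expansion from $a_n = a + nd$. The virtue of this identity is that all three summands are powerful whenever $(a_k, a_{k+1}, a_{k+2})$ is a powerful triple: a product of two powerful numbers is powerful, and $d^2, a_{k+1}^2$ are perfect squares. Accordingly, I set $A = d^2$, $B = a_k a_{k+2}$, $C = a_{k+1}^2$. Coprimality of $A$ and $B$ follows at once from $\gcd(a_n, d) = 1$ for every $n$ (a consequence of $\gcd(a,d) = 1$), so the hypotheses of the $abc$ conjecture are met by $(A, B, C)$.

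To estimate the radical I would use $\rad(x^n) = \rad(x)$, sub-multiplicativity $\rad(xy) \leq \rad(x)\rad(y)$, and the key lemma $\rad(y) \leq y^{1/2}$ for powerful $y$. Together these yield
$$\rad(ABC) \leq \rad(d)\rad(a_k)\rad(a_{k+1})\rad(a_{k+2}) \leq d \cdot a_k^{1/2} a_{k+1}^{1/2} a_{k+2}^{1/2} \leq d \cdot a_{k+1}^{3/2},$$
where the last inequality uses $a_k a_{k+2} = a_{k+1}^2 - d^2 \leq a_{k+1}^2$.

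Taking $\varepsilon = 1/4$, the $abc$ inequality $\rad(ABC)^{1+\varepsilon} < C$ becomes $(d \cdot a_{k+1}^{3/2})^{5/4} < a_{k+1}^2$, i.e.\ $d^{5/4} < a_{k+1}^{1/8}$, which holds for all $k$ beyond some $k_0 = k_0(a,d)$. Since $C = a_{k+1}^2$ is strictly increasing in $k$, distinct powerful triples produce distinct $abc$-triples, and the $abc$ conjecture therefore bounds the number of such $k$ above $k_0$; combined with the finitely many $k < k_0$ this proves the theorem. The real obstacle is choosing the right identity: the naive attempt $a_k + 2d = a_{k+2}$ fails because the resulting $\rad$ bound is of size $d \cdot a_{k+2}$, already comparable to $c = a_{k+2}$, so the $(1+\varepsilon)$ exponent destroys any gain. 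Using $a_k a_{k+2} + d^2 = a_{k+1}^2$ inflates $C$ to quadratic size while only adding a factor $a_{k+1}^{1/2}$ to the radical, which is exactly the slack the $abc$ conjecture needs.
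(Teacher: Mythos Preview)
Your proposal is correct and follows essentially the same route as the paper: both use the identity $a_k a_{k+2} + d^2 = a_{k+1}^2$, set $(A,B,C) = (d^2,\, a_k a_{k+2},\, a_{k+1}^2)$, and bound $\rad(ABC)$ via $\rad(a_i)\le a_i^{1/2}$ together with $a_k a_{k+2}\le a_{k+1}^2$. The only cosmetic differences are that the paper writes $N=\rad(d)$ in place of your cruder bound $\rad(d)\le d$ and takes $\varepsilon=\tfrac16$ rather than $\tfrac14$; your version is in fact slightly more careful in that you verify the coprimality hypothesis $\gcd(A,B)=1$ and the injectivity of $k\mapsto(A,B,C)$, points the paper leaves implicit.
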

\begin{proof}
Let $\rad(d)=N.$ Suppose that $(a_{k},a_{k+1},a_{k+2})$ is a powerful triple such that $a_{k}>N^{5}.$

Note the following inequality

$$a_{k}a_{k+2}=a_{k+1}^{2}-d^2<a_{k+1}^2.$$

 Let $a=d^{2},$ $b=a_{k}a_{k+2}$ and $c=(a_{k+1})^{2}.$ 
\\
Note that
$$a+b = c.$$
We we wish to show that there exists an epislon such that

$$\rad(abc)^{1+\varepsilon}<c,$$
and then invoke the abc-conjecture.
\\
We claim that $\varepsilon=\frac{1}{6}$ works.
\begin{align*}
& \rad(abc)^{\frac{7}{6}}
\\
= & \rad(d^{2}a_{k}a_{k+1}^{2}a_{k+2})^{\frac{7}{6}}
\\
\leq & N^{\frac{7}{6}}\left(\rad(a_{k})\rad(a_{k+1})\rad(a_{k+2})\right)^{\frac{7}{6}}
\\
\leq & N^{\frac{7}{6}}(a_{k}a_{k+1}a_{k+2})^{\frac{7}{12}}
\\
< & N^{\frac{7}{6}} a_{k+1}^{\frac{7}{4}}
\\
< & a_{k+1}^{\frac{7}{6}\cdot\frac{1}{5}}a_{k+1}^{\frac{7}{4}}
\\
= & a_{k+1}^{\frac{119}{60}}
\\
< & a_{k+1}^{2}
\\
= & c.
\end{align*}
\end{proof}

\section{Problems}
We now discuss some exercises the reader is encouraged to try.
\begin{enumerate}
\item
The natural numbers contains infinitely many powerful pairs. Is this true for all coprime arithmetic progressions?
\item
Show that $x+y=z$ where $(x,y)=1$ and $x,y,z$ are all 4-powerful has only finitely many solutions under the assumption of the abc conjecture.
\\
What happens if we take $x,y,z$ to be 3-powerful?
\item
When is $x^{n}+y^{n}$ a powerful number?
\item
Show that $2^{n}+1$ is powerful only finitely often assuming the abc conjecture.
\item
Show that $(n!)^r+k$ is powerful only finitely often. 
\item
We can apply most of our results to a larger set of number, i.e. those who are greater than there radical squared. For example 48 is greater than its radical squared but is not powerful. What other properties does this set have? 
\end{enumerate}

\end{document}